\documentclass[11pt]{article}
\usepackage{amsmath,latexsym,amsxtra,enumerate,
color, cancel}
\usepackage{bbm, dsfont}

\usepackage{amsthm}
\usepackage[english]{babel}
\usepackage[usenames,dvipsnames,svgnames,table]{xcolor}
\usepackage{times, amsfonts, amssymb, amsthm, amscd, graphicx,stmaryrd}
\usepackage[mathscr]{euscript}

\RequirePackage[colorlinks,citecolor=blue,urlcolor=blue]{hyperref}
\hypersetup{colorlinks=true}
\usepackage[active]{srcltx}
\usepackage{a4wide}




\theoremstyle{plain}
\newtheorem{thm}{Theorem}[section]
\newtheorem{lemma}[thm]{Lemma}

\newtheorem{cor}[thm]{Corollary}

\theoremstyle{definition}

\newtheorem{definition}[thm]{Definition}

\newtheorem{rem}[thm]{Remark}

\allowdisplaybreaks

\newcommand{\equa}{\begin{eqnarray*}}
\newcommand{\tion}{\end{eqnarray*}}
\newcommand{\equal}{\begin{eqnarray}}
\newcommand{\tionl}{\end{eqnarray}}



\parindent0em    

\makeatletter
\def\timenow{\@tempcnta\time
\@tempcntb\@tempcnta
\divide\@tempcntb60
\ifnum10>\@tempcntb0\fi\number\@tempcntb
:\multiply\@tempcntb60
\advance\@tempcnta-\@tempcntb
\ifnum10>\@tempcnta0\fi\number\@tempcnta}
\makeatother





\title{ (Non-)Distributivity of the Product for $\sigma$-Algebras with Respect to the Intersection}

\author{ Alexander Steinicke$^1$\\
\small  February 22, 2021
}
\date{}
\begin{document}

\maketitle
\begin{abstract}
We study the validity of the distributivity equation
$$(\mathcal{A}\otimes\mathcal{F})\cap(\mathcal{A}\otimes\mathcal{G})=\mathcal{A}\otimes\left(\mathcal{F}\cap\mathcal{G}\right),$$
where $\mathcal{A}$ is a $\sigma$-algebra on a set $X$, and $\mathcal{F}, \mathcal{G}$ are $\sigma$-algebras on a set $U$. \\
We present a counterexample for the general case and in the case of countably generated subspaces of analytic measurable spaces we give an equivalent condition in terms of the $\sigma$-algebras' atoms. Using this, we give a sufficient condition under which distributivity holds.
\end{abstract}

\vspace{1em}
{\noindent \textit{Keywords:} sigma algebra; intersection of sigma algebras; product sigma algebras; counterexample for sigma algebras
}
{\noindent
\footnotetext[1]{Department of Mathematics and Information Technology, Montanuniversitaet Leoben, Austria. \\ \hspace*{1.5em}  {\tt alexander.steinicke{\rm@}unileoben.ac.at}}}


\section{Introduction}

Since the axiomatisation of probability theory or -- more generally -- measure theory, the most fundamental structure of these theories are $\sigma$-algebras (often called $\sigma$-fields or also Borel structures). Together with similar structures, such as $\sigma$-rings or set algebras, they are well investigated as can be seen by opening any book on measure theory. These objects also appear in the context of boolean algebras (see e.g. the Stone representation theorem or the Loomis-Sikorski theorem). However, large amounts of general results are widely spread through the literature and an overall survey of the general results for $\sigma$-algebras is not known to the author. Indeed, collecting all such results was called a Herculanean task by Bhaskara Rao and Rao \cite{RaoRao} in their 1981 article 'Borel spaces'. We further want to mention the works of Aumann \cite{Aumann}, Basu \cite{Basu}, Bhaskara Rao and Bhaskara Rao \cite{BhaskaraBhaskara}, Blackwell \cite{Blackwell}, Georgiou \cite{Georgiou}, Grzegorek \cite{Grzegorek}, who contributed to the theory beyond the scope of a book on measure theory or a related topic. 

Within the subject of $\sigma$-algebras, even very simple constructions may already lead to nontrivial questions such as the following:

Let $\mathcal{A}$ be a $\sigma$-algebra on a set $X$ and $\mathcal{F},\mathcal{G}$ be two $\sigma$-algebras on a set $U$. The product of the $\sigma$-algebras $\mathcal{A}\otimes\mathcal{F}$ on $X\times U$ is defined as usual, as the smallest $\sigma$-algebra containing all Cartesian products (or rectangles) $\{A\times F: A\in \mathcal{A}, F\in \mathcal{F}\}$. We ask, for which $\sigma$-algebras $\mathcal{A},\mathcal{F}, \mathcal{G}$ is
\begin{align}\label{eq:main}
(\mathcal{A}\otimes\mathcal{F})\cap(\mathcal{A}\otimes\mathcal{G})=\mathcal{A}\otimes\left(\mathcal{F}\cap\mathcal{G}\right),
\end{align}
which we can interpret as distributivity of '$\otimes$', with respect to '$\cap$'. 
The problem posed in \eqref{eq:main}, consists of a simple binary composition of fundamental objects in measure theory and is therefore of its own interest. While the problem as such has not been posed before, some results have been derived as part of other studies and are summarized in Section \ref{counter}. To us, it was motivated by a question arising in stochastic analysis: let $f,g$ be two random variables on a probability space $(\Omega,\mathcal{F},\mathbb{P})$ (in the original question, the random variables were c\`adl\`ag processes). Consider then all random functions that may be expressed by a Borel function of $f$ and simultaneously by a function of $g$ and additionally have a measurable dependence on a real parameter. Such functions are contained in the set of all functions $h\colon (\Omega\times{[0,1]},(\sigma(f)\cap\sigma(g))\otimes\mathcal{B}([0,1]))\to\mathbb{R}$ (here $\sigma(f)$ denotes the $\sigma$-algebra generated by $f$, the same for $g$). Is this set of functions the same as the one consisting of functions, which may be expressed in two ways: 1) as bivariate Borel function applied to $f$ and the real parameter, and 2) as another bivariate Borel function applied to $g$ and the real parameter? The latter set would correspond to the functions measurable with respect to $$\left(\sigma(f)\otimes\mathcal{B}([0,1])\right)\cap\left(\sigma(g)\otimes\mathcal{B}([0,1])\right).$$ 

Regarding the general problem \eqref{eq:main} there is one immediate observation: since the right hand side is contained in both $\sigma$-algebras taking part in the intersection on the left, the left hand side is larger.
\begin{align*}
(\mathcal{A}\otimes\mathcal{F})\cap(\mathcal{A}\otimes\mathcal{G})\supseteq\mathcal{A}\otimes\left(\mathcal{F}\cap\mathcal{G}\right).
\end{align*}
For most simple examples of $\sigma$-algebras, e.g. Borel $\sigma$-algebras on a metric space and their restrictions on subsets or $\sigma$-algebras of symmetric or periodic sets, the converse inclusion is also satisfied. It is also easy to see that equality holds if $\mathcal{F}\subseteq\mathcal{G}$ or $\mathcal{G}\subseteq\mathcal{F}$.

It is tempting to assume, that relation \eqref{eq:main} should be as easy to prove or disprove, as the similar relation for the supremum of two $\sigma$-algebras $\mathcal{F}\vee\mathcal{G}$, which is the smallest $\sigma$-algebra containing $\mathcal{F}\cup\mathcal{G}$. It is not hard to show that
\begin{align*}
(\mathcal{A}\otimes\mathcal{F})\vee(\mathcal{A}\otimes\mathcal{G})=\mathcal{A}\otimes\left(\mathcal{F}\vee\mathcal{G}\right),
\end{align*}
see e.g. \cite[Proof of Lemma 3.2, Step 2]{Steinicke}. Alas, in the case of the intersection (or infimum) of $\sigma$-algebras there are no obvious reasons why the equality should hold. To show that the intersection might be the source of trouble, consider the following equation 
$$\left(\mathcal{A}\vee\mathcal{F}\right)\cap\left(\mathcal{A}\vee\mathcal{G}\right)=\mathcal{A}\vee\left(\mathcal{F}\cap\mathcal{G}\right).$$
Examples violating this equation can already be found for finite sub-$\sigma$-algebras of a $\sigma$-algebra that has cardinality strictly larger than $4$, see \cite[Proposition 34]{RaoRao}, where the lattice of $\sigma$-algebras on a set is studied. Equation \eqref{eq:main} does not admit such simple counterexamples.

Another indicator, why equation \eqref{eq:main} could be wrong in general is the following expression appearing in ergodic theory \cite{Parry}. For a decreasing sequence of sub $\sigma$-algebras $\left(\mathcal{A}_n\right)_{n\in \mathbb{N}}$ of $\mathcal{A}$ on a set $X$, and a measurable space $(U,\mathcal{U})$, the inequality
\begin{align*}
\left(\bigcap_{n\in \mathbb{N}}\mathcal{A}_n\right)\otimes\mathcal{U}\subseteq \bigcap_{n\in \mathbb{N}}\left(\mathcal{A}_n\otimes\mathcal{U}\right)
\end{align*}
can be strict. The difference between both sets is subtle: whenever we have probability measures $\mu$ and $\nu$ on the measure spaces $(X,\mathcal{A},\mu), (U,\mathcal{U},\nu)$, all sets $K$ in
$$\bigcap_{n\in \mathbb{N}}\left(\mathcal{A}_n\otimes\mathcal{U}\right)\setminus \left(\bigcap_{n\in \mathbb{N}}\mathcal{A}_n\right)\otimes\mathcal{U}$$
satisfy $(\mu\otimes\nu)(K)=0$ and are thus null sets for every arbitrary pair of probability measures $(\mu,\nu)$, see e.g. \cite[Section 4, Exercise 6]{Parry} and its discussion in \cite{stackexchange}.\bigskip

In Section \ref{counter} we will elaborate on a counterexample to \eqref{eq:main}, which is credited to G. Halmos and appeared first in Aumann \cite{Aumann}. We will also derive an equivalent condition to \eqref{eq:main} and use it to show that an intersection of products may not be the product of any $\sigma$-algebras on the same sets.

Section \ref{char} contains conditions under which equality \eqref{eq:main} is satisfied and which explain the validity of the equation for countably generated examples. For a certain class of $\sigma$-algebras we will formulate another condition in terms of the $\sigma$-algebras' atoms and show that this condition is both, sufficient and necessary.

\section{A First Negative Answer}\label{counter}
The counterexample in this section first appeared in a different context in the work of Aumann \cite{Aumann} who refers it to G. Halmos. In order to state and understand it, we need the following notation:
\begin{itemize}
\item For a set $X$, we denote the power set of $X$ by $\mathcal{P}(X)$.
\item Let $\mathcal{C}$ denote the $\sigma$-algebra of countable and co-countable sets on the unit interval $[0,1]$.

\item Let $\mathcal{B}$ denote the Borel $\sigma$-Algebra on $[0,1]$ with respect to the usual topology $\mathcal{T}$ on ${[0,1]}$.

\item The $\sigma$-algebra $\mathcal{D}$ is the preimage of $\mathcal{B}$ under a certain function $f\colon{[0,1]}\to{[0,1]}$ constructed  as follows (see also \cite{Aumann}, \cite{Rao}):

Let $\omega_\mathfrak{c}$ be the first ordinal corresponding to the cardinal $\mathfrak{c}$ of the continuum. Let $(M_{\alpha})_{1\leq\alpha<\omega_{\mathfrak{c}}}$ be an enumeration of all uncountable Borel subsets of $[0,1]$ with uncountable complement. Since all uncountable Borel sets have cardinality $\mathfrak{c}$ (see e.g. \cite[Theorem 13.6]{Kechris}) we can associate to each ordinal $\alpha<\omega_\mathfrak{c}$ a triplet $(x_\alpha,y_\alpha,z_\alpha)$ such that $x_\alpha, y_\alpha\in M_\alpha$, $z_\alpha\in [0,1]\setminus M_\alpha$ and $\{x_\alpha,y_\alpha,z_\alpha\}\cap \bigcup_{\beta<\alpha}\left\{x_\beta,y_\beta,z_\beta\right\}=\emptyset$ (as the cardinality of $\bigcup_{\beta<\alpha}\left\{x_\beta,y_\beta,z_\beta\right\}$ is strictly smaller than $\mathfrak{c}$).
Define $f$ as the function that for each $\alpha<\omega_\mathfrak{c}$ maps $x_\alpha\mapsto z_\alpha$, $z_\alpha\mapsto x_\alpha$ and keeps $y_\alpha$ and all points outside $\bigcup_{\beta<\omega_\mathfrak{c}}\left\{x_\beta,y_\beta,z_\beta\right\}$ fixed. 

Finally, set \begin{align*}
\mathcal{D}:=\sigma(f)=\left\{f^{-1}(B):B\in \mathcal{B}\right\}.
\end{align*}
\end{itemize}
The following lemma (found in Rao \cite{Rao}) shows a relation between the $\sigma$-algebras defined above.
\begin{lemma}[{\cite[Theorem 1]{Rao}}]\label{lem:intersection}
The intersection $\mathcal{B}\cap\mathcal{D}$ equals the $\sigma$-algebra $\mathcal{C}$ of countable and co-countable sets (which is not countably generated).
\end{lemma}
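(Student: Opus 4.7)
The plan is to prove both inclusions. The inclusion $\mathcal{C} \subseteq \mathcal{B}\cap\mathcal{D}$ is essentially trivial: every countable set is Borel, and the image of a countable set under the bijection $f$ is again countable and hence Borel, so every countable set belongs to $\mathcal{D}$; co-countable sets are then handled by taking complements. The real content is $\mathcal{B}\cap\mathcal{D}\subseteq \mathcal{C}$. Because $f$ is an involution (it swaps disjoint pairs $x_\alpha,z_\alpha$ and fixes the rest), one has $\mathcal{D}=\{E\subseteq[0,1]\colon f(E)\in\mathcal{B}\}$, so the task reduces to showing that whenever $E$ and $f(E)$ are both Borel, $E$ must be countable or co-countable.

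The engine of the argument is the following auxiliary observation: every $f$-invariant Borel set is already countable or co-countable. Indeed, if such a set $S$ were uncountable with uncountable complement, then $S=M_\alpha$ for some $\alpha$, and the construction forces $x_\alpha\in M_\alpha$ while $z_\alpha=f(x_\alpha)\notin M_\alpha$, violating $f$-invariance. To exploit this, for $E\in\mathcal{B}\cap\mathcal{D}$ I would partition $[0,1]$ into the four Borel pieces $A=E\cap f(E)$, $B=E\setminus f(E)$, $C=f(E)\setminus E$ and $D=E^c\cap f(E)^c$. Using $f^2=\mathrm{id}$, each of the three sets $A$, $B\cup C=E\triangle f(E)$ and $D$ is $f$-invariant, hence countable or co-countable by the auxiliary observation.

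The main obstacle is the case in which $B\cup C$ is co-countable (so that $A$ and $D$ are countable): here $B$ itself is \emph{not} $f$-invariant, so invariance alone gives no information about $B$. The resolution is to use the fixed point $y_\beta$ that is built into every $M_\beta$: if $B$ were equal to some $M_\beta$, then $y_\beta\in B=E\setminus f(E)$ would require both $y_\beta\in E$ and $y_\beta\notin f(E)$, but $f(y_\beta)=y_\beta$ makes these two memberships equivalent, yielding a contradiction. Hence $B$ too is countable or co-countable, and in all sub-cases $E=A\cup B$ is seen to lie in $\mathcal{C}$. The parenthetical claim that $\mathcal{C}$ is not countably generated is classical: for any countable candidate generator $\{A_n\}$ the countable $A_n$ together with the complements of the co-countable ones produce a countable ``exceptional'' set outside of which $\sigma(\{A_n\})$ cannot separate points, contradicting the fact that every singleton belongs to $\mathcal{C}$.
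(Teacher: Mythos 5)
Your proof is correct, but it takes a genuinely different route from the paper's. The paper argues one generator at a time: for each uncountable Borel set $M_\alpha$ with uncountable complement it shows $f^{-1}(M_\alpha)\notin\mathcal{B}$, by first proving (via the sets $F_\alpha=\{x\in M_\alpha: f(x)=x\}$ and the fixed points $y_\beta$) that uncountably many points of $M_\alpha$ are fixed by $f$, so that the $f$-invariant set $f^{-1}(M_\alpha)\cap M_\alpha$ is uncountable with uncountable complement and would have to be some non-invariant $M_\gamma$ if it were Borel. You instead start from an arbitrary $E\in\mathcal{B}\cap\mathcal{D}$ and split $[0,1]$ into the invariant Borel pieces $E\cap f(E)$, $E\triangle f(E)$ and $(E\cup f(E))^c$, each of which is countable or co-countable because no $M_\gamma$ is $f$-invariant; the one non-invariant piece $E\setminus f(E)$ you dispose of by observing that it contains no fixed point of $f$, whereas every $M_\beta$ contains the fixed point $y_\beta$. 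Both proofs rest on the same two design features of $f$ (it moves some point out of every $M_\alpha$, and it fixes some point of every $M_\beta$), but your use of the fixed points is more economical: it bypasses the paper's cardinality sub-lemma that each $M_\alpha$ contains uncountably many fixed points. You also supply two items the paper leaves implicit, namely the easy inclusion $\mathcal{C}\subseteq\mathcal{B}\cap\mathcal{D}$ and the standard separation argument showing that $\mathcal{C}$ is not countably generated. All steps check out; in particular $E\in\mathcal{D}$ is indeed equivalent to $f(E)\in\mathcal{B}$ because $f$ is an involution, and the invariance of the three pieces follows from $f^2=\mathrm{id}$ exactly as you state.
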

\begin{proof}
For each $\alpha<\omega_\mathfrak{c}$, $M_\alpha$ is changed by $f$ since the $x_\alpha$ are changed to $z_\alpha$. Assume that the set $F_\alpha=\{x\in M_\alpha: f(x)=x\}$ is at most countable. Then $M_\alpha\setminus F_\alpha$ remains uncountable as does its complement $[0,1]\setminus(M_\alpha\setminus F_\alpha)$. Thus, $M_\alpha\setminus F_\alpha=M_\beta$ for some ordinal $\beta<\omega_\mathfrak{c}$. As $y_\beta$ is kept fixed by $f$, it is a fixed point not included in $F_\alpha$, which is a contradiction. Thus an uncountable number of points in each $M_\alpha$ are kept fixed by $f$. This implies that, the set $f^{-1}(M_\alpha)\cap M_\alpha$, which is invariant under $f$, is uncountable. Its complement is also uncountable since it contains the complement of $M_\alpha$. If $f^{-1}(M_\alpha)$ were Borel, the set $f^{-1}(M_\alpha)\cap M_\alpha$ is too as intersection of Borel sets. Being an uncountable Borel set with uncountable complement, $f^{-1}(M_\alpha)\cap M_\alpha=M_\gamma$ for some $\gamma<\omega_\mathfrak{c}$, which leads to a contradiction since $M_\gamma$ cannot be fixed under $f$. Thus $f^{-1}(M_\alpha)$ is not a Borel set and the only Borel sets in $\mathcal{D}$ are the countable and co-countable ones.
\end{proof}

The counterexample is now essentially a combination of Lemmas 7.1, 7.2 in \cite{Aumann}. For the reader's convenience we provide a full proof.
\begin{thm}[{}]\label{thm:counterexample}
Let $\mathcal{D}$ and $\mathcal{B}$ be the $\sigma$-algebras defined above. Then
\begin{align*}
\left((\mathcal{D}\vee\mathcal{B})\otimes\mathcal{B}\right)\cap\left((\mathcal{D}\vee\mathcal{B})\otimes\mathcal{D}\right)\neq (\mathcal{D}\vee\mathcal{B})\otimes(\mathcal{B}\cap \mathcal{D}).
\end{align*}
\end{thm}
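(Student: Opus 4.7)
My plan is to exhibit one concrete set that belongs to the left-hand side but not to the right-hand side of the claimed equality. By Lemma~\ref{lem:intersection}, $\mathcal{B}\cap\mathcal{D}=\mathcal{C}$, so it suffices to separate $((\mathcal{D}\vee\mathcal{B})\otimes\mathcal{B})\cap((\mathcal{D}\vee\mathcal{B})\otimes\mathcal{D})$ from $(\mathcal{D}\vee\mathcal{B})\otimes\mathcal{C}$. My candidate is the diagonal $\Delta:=\{(x,x):x\in[0,1]\}\subseteq[0,1]^2$.

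The easy half is to place $\Delta$ in both intersecands. For $(\mathcal{D}\vee\mathcal{B})\otimes\mathcal{B}$, I would invoke the standard fact that the diagonal of a countably generated, point-separating $\sigma$-algebra lies in its own product: writing $\mathcal{B}=\sigma(\{B_n\}_n)$ with the $B_n$ separating points, one has $\Delta=\bigcap_n\bigl((B_n\times B_n)\cup(B_n^c\times B_n^c)\bigr)\in\mathcal{B}\otimes\mathcal{B}\subseteq(\mathcal{D}\vee\mathcal{B})\otimes\mathcal{B}$. For $(\mathcal{D}\vee\mathcal{B})\otimes\mathcal{D}$, I would exploit that the construction makes $f$ an involution, hence a bijection of $[0,1]$, so that $(f\times f)^{-1}(\Delta)=\Delta$; coupled with the routine identity $\mathcal{D}\otimes\mathcal{D}=(f\times f)^{-1}(\mathcal{B}\otimes\mathcal{B})$, this puts $\Delta\in\mathcal{D}\otimes\mathcal{D}\subseteq(\mathcal{D}\vee\mathcal{B})\otimes\mathcal{D}$.

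The real work — and the main obstacle — is showing $\Delta\notin(\mathcal{D}\vee\mathcal{B})\otimes\mathcal{C}$. I would establish the following structural lemma about products with $\mathcal{C}$: for any $\sigma$-algebra $\mathcal{A}$ on a set $X$ and any $F\in\mathcal{A}\otimes\mathcal{C}$, there exist $A\in\mathcal{A}$ and a countable set $N\subseteq[0,1]$ such that the vertical section $F^y:=\{x\in X:(x,y)\in F\}$ equals $A$ for every $y\notin N$. A good-set argument verifies this: the collection of such $F$ is closed under complements (pass from $A$ to $A^c$) and under countable unions (amalgamate the exceptional sets and take the union of the $A_i$), and it contains every rectangle $A\times C$ with $C\in\mathcal{C}$ — choose $(A',N)=(\emptyset,C)$ when $C$ is countable, and $(A',N)=(A,C^c)$ when $C$ is co-countable.

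To conclude, I would apply the structural lemma to $F=\Delta$: the sections $\Delta^y=\{y\}$ are pairwise distinct singletons, so no single $A\in\mathcal{D}\vee\mathcal{B}$ can coincide with $\{y\}$ for every $y$ in the (uncountable) co-countable set $[0,1]\setminus N$. This contradiction shows $\Delta\notin(\mathcal{D}\vee\mathcal{B})\otimes\mathcal{C}=(\mathcal{D}\vee\mathcal{B})\otimes(\mathcal{B}\cap\mathcal{D})$ and separates the two $\sigma$-algebras, proving the theorem. The crux is the structural lemma, which isolates the rigidity that $\mathcal{C}$-factors force on sections and is absent from the Borel and $\mathcal{D}$ factors appearing on the left.
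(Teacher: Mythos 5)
Your proof is correct and follows the same overall strategy as the paper: the witness is the diagonal $\Delta$, and the exclusion from the right-hand side rests on the observation that every set in $\mathcal{A}\otimes\mathcal{C}$ has vertical sections equal to one fixed set for all $y$ outside a countable exceptional set. The differences are in execution, and in both halves your version is the more elementary one. For membership of $\Delta$ in the two product $\sigma$-algebras the paper argues topologically ($\Delta$ is closed in the Hausdorff product topologies of $([0,1],\mathcal{T})$ and $([0,1],\mathcal{T}_f)$, and the Borel $\sigma$-algebra of a product of separable metric spaces is the product of the Borel $\sigma$-algebras, citing Engelking and Bogachev), whereas you use the explicit identity $\Delta=\bigcap_n\bigl((B_n\times B_n)\cup(B_n^c\times B_n^c)\bigr)$ for a countable separating generator of $\mathcal{B}$, together with $(f\times f)^{-1}(\mathcal{B}\otimes\mathcal{B})=\mathcal{D}\otimes\mathcal{D}$ and $(f\times f)^{-1}(\Delta)=\Delta$, the latter valid because $f$ is an involution and hence injective. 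For the exclusion, the paper runs the same section-constancy idea through the functional monotone class theorem applied to the vector space $\mathscr{H}$ of functions agreeing with some $g(x)$ for all but countably many $y$; your set-level good-set lemma is precisely the restriction of that argument to indicator functions and needs only that the good sets form a $\sigma$-algebra containing the rectangles. Like the paper, you in fact exclude $\Delta$ from the larger $\sigma$-algebra $\mathcal{P}([0,1])\otimes\mathcal{C}$. What your route buys is self-containedness; what the paper's topological phrasing buys is the subsequent remark transferring the counterexample to the lattice of topologies.
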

\begin{proof}
Let $\Delta:=\{(x,x)\in {[0,1]}^2\colon x\in [0,1]\}$ be the diagonal, which as a closed set is contained in $\mathcal{B}\otimes\mathcal{B}$. The $\sigma$-algebra $\mathcal{D}$ can be seen as the Borel $\sigma$-algebra generated by the topology $$\mathcal{T}_f:=\left\{f^{-1}(O): O\in \mathcal{T}\right\},$$
where $f$ is the function defined above Lemma \ref{lem:intersection}. Since $f$ is bijective, we know that $([0,1],\mathcal{T}_f)$ is Hausdorff. Therefore $\Delta$ is a closed set with respect to the product topology of $([0,1],\mathcal{T}_f)^2$ (this follows e.g. from \cite[Theorem 1.5.4]{Engelking} setting $X=[0,1]^2$, using the projection functions onto the first and onto the second variable). Since $([0,1],\mathcal{T}_f)$ is a separable metric space, by \cite[Lemma 6.4.2]{Bogachev} the $\sigma$-algebra generated by the product topology of $([0,1],\mathcal{T}_f)^2$ is $\mathcal{D}\otimes\mathcal{D}$. Hence $\Delta$ is contained in $(\mathcal{D}\otimes\mathcal{D})\cap (\mathcal{B}\otimes\mathcal{B})$ and is thus also contained in $$
\left((\mathcal{D}\vee\mathcal{B})\otimes\mathcal{B}\right)\cap\left((\mathcal{D}\vee\mathcal{B})\otimes\mathcal{D}\right).$$

It is left to show that $\Delta\notin (\mathcal{D}\vee\mathcal{B})\otimes(\mathcal{B}\cap \mathcal{D})$. As seen before in Lemma \ref{lem:intersection}, $\mathcal{B}\cap \mathcal{D}=\mathcal{C}$ is the $\sigma$-algebra of countable and co-countable sets. In fact, $\Delta$ is not even contained in the larger $\sigma$-algebra $\mathcal{P}([0,1])\otimes\mathcal{C}$ as can be seen by the following argument:

Let $\Pi:=\left\{A\times C : A\in \mathcal{P}([0,1]), C\in \mathcal{C}\right\}$ be the $\pi$-system of Cartesian products of the sets in $\mathcal{P}([0,1])$ and $\mathcal{C}$. Let $\mathscr{H}$ denote the set of functions 
\begin{align*}
\mathscr{H}:=\bigg\{f\colon{[0,1]}^2\to\mathbb{R} : \exists g\colon{[0,1]}\to\mathbb{R} \text{ s.t. } f(x,y)=g(x) \text{ except for countably many } y\bigg\}.
\end{align*}
It is easy to check that $\mathscr{H}$ is a vector space. For the characteristic function $\chi_{A\times C}$ with $A\times C\in \Pi$, the mapping $y\mapsto \chi_{A\times C}(\cdot, y)$ equals the $0$ function for all but countably many $y$ if $C$ is countable, and equals
 the function $\chi_A$ for all but countably many $y$ if $C$ is co-countable. Hence $\chi_{A\times C}\in \mathscr{H}$.
 Consider a monotone limit of functions $f_n$ in $\mathscr{H}$ converging to $f\colon{[0,1]}^2\to\mathbb{R}$. For all $n\in \mathbb{N}$ there is a countable set $A_n$ such that for some function $g_n\colon{[0,1]}\to\mathbb{R}$ and for all $y\in {[0,1]}\setminus \!A_n$ we have $f_n(x,y)=g_n(x)$. Since 
 \begin{align*}
 f_n\nearrow f \text{ for }y\in {[0,1]}\setminus\left(\bigcup_{n\in\mathbb{N}}A_n\right),
 \end{align*} 
 it follows that $f_n(x,y)=g_n(x)\nearrow f(x,y)$. Hence on this set, the limit $f$ does not depend on $y$ and is thus a function $g\colon{[0,1]}\to\mathbb{R}$ for all $y$ except those that are in the at most countable set $\bigcup_{n\in\mathbb{N}}A_n$. Therefore, $f$ is also contained in $\mathscr{H}$. We infer now by the monotone class theorem for functions that $\mathscr{H}$ contains all bounded $\mathcal{P}({[0,1]})\otimes\mathcal{C}$-measurable functions, in particular all indicator functions of the $\sigma$-algebra's elements. We next show that $\chi_\Delta$ is not included in $\mathscr{H}$: the mapping $y\mapsto \chi_\Delta(\cdot,y)$ has a different value for each $y\in {[0,1]}$, namely $\chi_{\{y\}}$, thus does not equal one function $x\mapsto g(x)$ for all but countably many $y$.
 Hence, $\chi_\Delta\notin\mathscr{H}$. It follows that $\Delta\notin \mathcal{P}([0,1])\otimes\mathcal{C}$ and is thus neither in $(\mathcal{D}\vee\mathcal{B})\otimes\mathcal{C}$ which proves the assertion.
 \end{proof}
 
 \begin{rem}
 The construction of the above $\sigma$-algebra $\mathcal{D}$ relies on the function $f$. The same construction can be directly transferred to the context of topologies: Let $\mathcal{T}_f$ denote the initial topology of $f$ (as in the above proof) and $\mathcal{T}$ the usual topology on ${[0,1]}$. Then $\mathcal{T}_f\cap\mathcal{T}$ is the topology of co-countable subsets of ${[0,1]}$. With the notation $\vee$ for the supremum and $\otimes_{\operatorname{Top}}$ for the product topology of two given topologies, by the same procedure as in the above proof we see that
 \begin{align*}
 {[0,1]}^2\setminus\Delta\in \left((\mathcal{T}_f\vee\mathcal{T})\otimes_{\operatorname{Top}}\mathcal{T}\right)\cap \left((\mathcal{T}_f\vee\mathcal{T})\otimes_{\operatorname{Top}}\mathcal{T}_f\right),
 \end{align*}
 but
 \begin{align*}
 {[0,1]}^2\setminus\Delta\notin (\mathcal{T}_f\vee\mathcal{T})\otimes_{\operatorname{Top}}(\mathcal{T}\cap\mathcal{T}_f).
 \end{align*}
 \end{rem}

The following characterization will lead to a strengthening of the assertion of Theorem \ref{thm:counterexample}. It will show that the sets are not only different but that the intersection of the product $\sigma$-algebras cannot even be written as product of any other sub-$\sigma$-algebras of ${[0,1]}$.

\begin{lemma}
For $\sigma$-algebras $\mathcal{A}$ on a set $X$ and $\mathcal{F},\mathcal{G}$ on $U$ the following assertions are equivalent:
\begin{enumerate}[(i)]
\item\label{uno} $(\mathcal{A}\otimes\mathcal{F})\cap(\mathcal{A}\otimes\mathcal{G})=\mathcal{A}\otimes\left(\mathcal{F}\cap\mathcal{G}\right)$
\item\label{due} There are $\sigma$-algebras $\mathcal{A}_0$ on $X$ and $\mathcal{E}$ on $U$ such that 
\begin{align*}
(\mathcal{A}\otimes\mathcal{F})\cap(\mathcal{A}\otimes\mathcal{G})=\mathcal{A}_0\otimes\mathcal{E}.
\end{align*}
\end{enumerate}
\end{lemma}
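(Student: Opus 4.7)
The implication (\ref{uno}) $\Rightarrow$ (\ref{due}) is trivial: take $\mathcal{A}_0=\mathcal{A}$ and $\mathcal{E}=\mathcal{F}\cap\mathcal{G}$. So all the content is in (\ref{due}) $\Rightarrow$ (\ref{uno}), and my plan is to peel measurability information off the rectangles generating $\mathcal{A}_0\otimes\mathcal{E}$ using the classical section principle. Recall the standard fact: if $H\in\mathcal{A}\otimes\mathcal{F}$, then for each $x\in X$ the section $H_x:=\{u\in U\colon(x,u)\in H\}$ belongs to $\mathcal{F}$, and symmetrically each $u$-section $H^u:=\{x\in X\colon(x,u)\in H\}$ belongs to $\mathcal{A}$ (proved by checking that the family of sets with this property is a $\sigma$-algebra containing all rectangles).

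Assuming (\ref{due}), we have the chain of inclusions
\begin{align*}
\mathcal{A}\otimes(\mathcal{F}\cap\mathcal{G})\ \subseteq\ (\mathcal{A}\otimes\mathcal{F})\cap(\mathcal{A}\otimes\mathcal{G})\ =\ \mathcal{A}_0\otimes\mathcal{E}\ \subseteq\ \mathcal{A}\otimes\mathcal{F},
\end{align*}
and, symmetrically, $\mathcal{A}_0\otimes\mathcal{E}\subseteq\mathcal{A}\otimes\mathcal{G}$. Since the reverse inclusion $\mathcal{A}\otimes(\mathcal{F}\cap\mathcal{G})\subseteq\mathcal{A}_0\otimes\mathcal{E}$ holds, the only missing piece is $\mathcal{A}_0\otimes\mathcal{E}\subseteq\mathcal{A}\otimes(\mathcal{F}\cap\mathcal{G})$.

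To establish this, I would pick a generating rectangle $A_0\times E$ with $A_0\in\mathcal{A}_0$ and $E\in\mathcal{E}$. The case where either factor is empty is immediate. Otherwise, fix $x_0\in A_0$ and $u_0\in E$. Since $A_0\times E\in\mathcal{A}_0\otimes\mathcal{E}\subseteq\mathcal{A}\otimes\mathcal{F}$, the $x_0$-section $E=(A_0\times E)_{x_0}$ lies in $\mathcal{F}$, and the $u_0$-section $A_0=(A_0\times E)^{u_0}$ lies in $\mathcal{A}$. Applying the same argument through the inclusion $\mathcal{A}_0\otimes\mathcal{E}\subseteq\mathcal{A}\otimes\mathcal{G}$ yields $E\in\mathcal{G}$ as well, so $E\in\mathcal{F}\cap\mathcal{G}$. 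Hence $A_0\times E\in\mathcal{A}\otimes(\mathcal{F}\cap\mathcal{G})$. Because such rectangles generate $\mathcal{A}_0\otimes\mathcal{E}$ as a $\sigma$-algebra, the desired inclusion follows, and combining it with the reverse gives (\ref{uno}).

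I don't anticipate a real obstacle: the proof is a clean two-line application of the section principle. The only minor bookkeeping is the empty-rectangle case, which is handled in one sentence. The conceptual point worth emphasizing is that the hypothesis (\ref{due}) forces the abstract factors $\mathcal{A}_0$ and $\mathcal{E}$ to actually consist of sets in $\mathcal{A}$ and $\mathcal{F}\cap\mathcal{G}$ respectively, so nothing genuinely new can appear on the right-hand side beyond $\mathcal{A}\otimes(\mathcal{F}\cap\mathcal{G})$.
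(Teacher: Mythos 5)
Your proof is correct and follows essentially the same route as the paper: both arguments apply the measurability of sections to a generating rectangle $A_0\times E$ to conclude $A_0\in\mathcal{A}$ and $E\in\mathcal{F}\cap\mathcal{G}$, then pass to the generated $\sigma$-algebra. Your version is marginally more careful in noting the empty-rectangle case and in spelling out that the remaining inclusion combines with the always-valid one $\mathcal{A}\otimes(\mathcal{F}\cap\mathcal{G})\subseteq(\mathcal{A}\otimes\mathcal{F})\cap(\mathcal{A}\otimes\mathcal{G})$.
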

\begin{proof}
The direction \eqref{uno}$\Rightarrow$\eqref{due} is obvious.

For the other implication assume that $(\mathcal{A}\otimes\mathcal{F})\cap(\mathcal{A}\otimes\mathcal{G})$ is of product form $\mathcal{A}_0\otimes\mathcal{E}$. Clearly, for all $A\in\mathcal{A}_0$ and $E\in\mathcal{E}$ we have $A\times E\in \mathcal{A}_0\otimes\mathcal{E}$. Since also $A\times E\in \mathcal{A}\otimes\mathcal{F}$ and since sections are measurable, it follows that $E\in \mathcal{F}$ and $A\in \mathcal{A}$. In the same way, since $A\times E\in \mathcal{A}\otimes\mathcal{G}$, we can conclude that $E\in \mathcal{G}$. Hence, $A\times E\in \mathcal{A}\otimes\left(\mathcal{F}\cap\mathcal{G}\right)$. Since $\mathcal{A}_0\otimes\mathcal{E}$ is generated by products, which are all contained in $\mathcal{A}\otimes\left(\mathcal{F}\cap\mathcal{G}\right)$, 
$$\mathcal{A}_0\otimes\mathcal{E}\subseteq \mathcal{A}\otimes\left(\mathcal{F}\cap\mathcal{G}\right),$$
which implies \eqref{uno}.
\end{proof}
The lemma leads to the following corollary.
\begin{cor}
The $\sigma$-algebra $\left((\mathcal{D}\vee\mathcal{B})\otimes\mathcal{B}\right)\cap\left((\mathcal{D}\vee\mathcal{B})\otimes\mathcal{D}\right)$ is not of product form $\mathcal{A}_0\otimes\mathcal{E}$ for any $\sigma$-algebras $\mathcal{A}_0,\mathcal{E}$ on ${[0,1]}^2$.
\end{cor}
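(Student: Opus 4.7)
The plan is to invoke the equivalence established in the preceding lemma, applied to the specific choice $\mathcal{A} := \mathcal{D}\vee\mathcal{B}$, $\mathcal{F} := \mathcal{B}$, $\mathcal{G} := \mathcal{D}$ (all viewed as $\sigma$-algebras on $[0,1]$). Under this identification, the $\sigma$-algebra whose product form is in question, namely $\bigl((\mathcal{D}\vee\mathcal{B})\otimes\mathcal{B}\bigr)\cap\bigl((\mathcal{D}\vee\mathcal{B})\otimes\mathcal{D}\bigr)$, is exactly the left-hand side in condition \eqref{uno}, while the assertion that it equals some product $\mathcal{A}_0\otimes\mathcal{E}$ is precisely condition \eqref{due}.

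I would then argue by contraposition. Theorem \ref{thm:counterexample} states that for this choice of $\mathcal{A},\mathcal{F},\mathcal{G}$ condition \eqref{uno} fails, since the diagonal $\Delta$ belongs to the intersection on the left but not to $(\mathcal{D}\vee\mathcal{B})\otimes(\mathcal{B}\cap\mathcal{D})$. The equivalence \eqref{uno}$\Leftrightarrow$\eqref{due} in the lemma then immediately rules out \eqref{due}, which is the statement of the corollary. Equivalently: were there $\sigma$-algebras $\mathcal{A}_0$ on $[0,1]$ and $\mathcal{E}$ on $[0,1]$ giving such a product representation, the lemma would force the distributivity identity $(\mathcal{A}\otimes\mathcal{F})\cap(\mathcal{A}\otimes\mathcal{G})=\mathcal{A}\otimes(\mathcal{F}\cap\mathcal{G})$, in direct contradiction to Theorem \ref{thm:counterexample}.

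There is essentially no obstacle to overcome: the corollary is a one-line consequence of the lemma and Theorem \ref{thm:counterexample}. All the substantive work -- the transfinite construction of $\mathcal{D}$, the identification $\mathcal{B}\cap\mathcal{D}=\mathcal{C}$ from Lemma \ref{lem:intersection}, the verification that $\Delta\in(\mathcal{D}\otimes\mathcal{D})\cap(\mathcal{B}\otimes\mathcal{B})$ via the Hausdorff property of $([0,1],\mathcal{T}_f)$, and the monotone-class argument showing $\Delta\notin\mathcal{P}([0,1])\otimes\mathcal{C}$ -- has already been carried out. The only minor point worth flagging is to make sure that the pair $(\mathcal{A}_0,\mathcal{E})$ ruled out by the corollary ranges over \emph{all} sub-$\sigma$-algebras of $\mathcal{P}([0,1])$, not only those appearing inside $\mathcal{A}\otimes\mathcal{F}$ or $\mathcal{A}\otimes\mathcal{G}$; this is automatic because the section argument in the lemma's proof of \eqref{due}$\Rightarrow$\eqref{uno} immediately forces $\mathcal{A}_0\subseteq\mathcal{A}$ and $\mathcal{E}\subseteq\mathcal{F}\cap\mathcal{G}$ from the sole hypothesis of a product representation.
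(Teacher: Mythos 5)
Your proposal is correct and is exactly the argument the paper intends (the paper states the corollary follows from the lemma without writing out a proof): apply the lemma's equivalence \eqref{uno}$\Leftrightarrow$\eqref{due} with $\mathcal{A}=\mathcal{D}\vee\mathcal{B}$, $\mathcal{F}=\mathcal{B}$, $\mathcal{G}=\mathcal{D}$, and note that Theorem \ref{thm:counterexample} refutes \eqref{uno}, hence \eqref{due} fails. Your closing remark that the lemma already handles arbitrary $\mathcal{A}_0,\mathcal{E}$ via the section argument is accurate and matches the paper's reasoning.
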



\section{Characterization of Distributivity in the Case of Countable Generation}\label{char}

In this section we derive a positive result for the relation 
\begin{align*}
(\mathcal{A}\otimes\mathcal{F})\cap(\mathcal{A}\otimes\mathcal{G})=\mathcal{A}\otimes(\mathcal{F}\cap\mathcal{G}).
\end{align*}
To that end, we give a characterization of the above equation if all involved $\sigma$-algebras are countably generated sub $\sigma$-algebras of analytic spaces (see Definition \ref{def:count} below). 

\begin{definition}\label{def:count}\noindent
Let $(X,\mathcal{M})$ be a measurable space. Recall that a subset of $[0,1]$ is called {\it analytic} if it is the image of a Polish space under a continuous function.
\begin{enumerate}[(i)]
\item  We call $\mathcal{M}$ {\it countably generated} if it is generated by an at most countable family of sets, i.e. $\mathcal{M}=\sigma\left(\{M_n, n\in \mathbb{N}\}\right)$.

\item We call $\mathcal{M}$ {\it countably separated} if there is an at most countable collection of sets $(M_n)_{n\in\mathbb{N}}$ that separates the points of $X$, i.e. for all $x,y\in X, x\neq y$ there is $n\in\mathbb{N}$ such that $x\in M_n$ and $y\notin M_n$ or $x\notin M_n$ and $y\in M_n$. 

\item We call $(X,\mathcal{M})$ {\it analytic} if $X$ is isomorphic (i.e. there is a bijective, bimeasurable function) to an analytic subset of the unit interval and $\mathcal{M}$ is countably generated and contains all singletons of $X$.

\item An {\it atom} of $\mathcal{M}$ is a nonempty set $A\in\mathcal{M}$ such that no proper, nonempty subset of $A$ is contained in $\mathcal{M}$. 
\end{enumerate}

\end{definition}

In order to derive the equivalent condition for \eqref{eq:main} we need to present a result of Blackwell \cite{Blackwell} and Mackey \cite{Mackey} first, which we will cite as Lemma \ref{lem:blackwell} below, see also Bhaskara Rao and Rao \cite{RaoRao}.

\begin{lemma}[{\cite[Section 4]{Blackwell}, \cite[Section 4]{Mackey}, \cite[Proposition 6]{RaoRao}}]\label{lem:blackwell}\ \\
If $(X,\mathcal{M})$ is an analytic space and $\mathcal{F},\mathcal{G}$ are countably generated sub $\sigma$-algebras of $\mathcal{M}$ with the same atoms, then $\mathcal{F}=\mathcal{G}$. 
\end{lemma}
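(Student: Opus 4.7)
The plan is to deduce the lemma from Blackwell's characterisation theorem, which in the form used here reads: if $(X,\mathcal{M})$ is analytic and $\mathcal{H}$ is a countably generated sub-$\sigma$-algebra of $\mathcal{M}$, then a set $A\in\mathcal{M}$ belongs to $\mathcal{H}$ if and only if $A$ is a union of atoms of $\mathcal{H}$. Once this characterisation is available, the conclusion follows with very little work.

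First I would record an elementary, purely set-theoretic fact valid for any countably generated $\sigma$-algebra $\mathcal{H}$ on $X$ with countable generating family $(H_n)_{n\in\mathbb{N}}$: the equivalence relation $x\sim_\mathcal{H}y\Leftrightarrow \forall n\,(x\in H_n\Leftrightarrow y\in H_n)$ has equivalence classes of the form $\bigcap_n H_n^{\varepsilon_n(x)}$, where $H_n^1:=H_n$ and $H_n^0:=X\setminus H_n$. These classes therefore belong to $\mathcal{H}$ and are precisely its atoms. Moreover the family $\{H\in\mathcal{H}:H\text{ is a union of }\sim_\mathcal{H}\text{-equivalence classes}\}$ is a $\sigma$-algebra containing every $H_n$, hence equals $\mathcal{H}$; so every element of $\mathcal{H}$ is a (disjoint) union of its atoms.

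With this in hand, by symmetry it suffices to prove $\mathcal{F}\subseteq\mathcal{G}$. Take $F\in\mathcal{F}$. By the preceding observation, $F$ is a union of atoms of $\mathcal{F}$, and by hypothesis these are the same as the atoms of $\mathcal{G}$, so $F$ is also a union of atoms of $\mathcal{G}$. Since $\mathcal{F}\subseteq\mathcal{M}$, we have $F\in\mathcal{M}$; applying Blackwell's characterisation to the countably generated sub-$\sigma$-algebra $\mathcal{G}\subseteq\mathcal{M}$ then yields $F\in\mathcal{G}$, and exchanging the roles of $\mathcal{F}$ and $\mathcal{G}$ gives $\mathcal{F}=\mathcal{G}$.

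The main obstacle is Blackwell's characterisation theorem itself. I would prove it by writing $\mathcal{H}=\sigma(\phi)$ for the measurable map $\phi\colon X\to\{0,1\}^\mathbb{N}$ whose $n$-th coordinate equals $\chi_{H_n}$; the atoms of $\mathcal{H}$ are then precisely the fibres of $\phi$, and any $\sim_\mathcal{H}$-saturated set $A\in\mathcal{M}$ satisfies $A=\phi^{-1}(\phi(A))$. Identifying $X$ with an analytic subset of $[0,1]$ via the isomorphism from Definition \ref{def:count}\,(iii), the images $\phi(A)$ and $\phi(X\setminus A)$ are disjoint analytic subsets of $\{0,1\}^\mathbb{N}$, and the Lusin separation theorem produces a Borel set $B\subseteq\{0,1\}^\mathbb{N}$ with $\phi(A)\subseteq B$ and $B\cap\phi(X\setminus A)=\emptyset$; consequently $A=\phi^{-1}(B)\in\mathcal{H}$. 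This single invocation of Lusin separation is the only genuinely non-trivial ingredient, and it is precisely where the analyticity hypothesis on $(X,\mathcal{M})$ enters.
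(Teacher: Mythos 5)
Your argument is correct, but note that the paper does not prove this lemma at all: it is stated as a cited result (Blackwell, Mackey, Bhaskara Rao--Rao), so there is no internal proof to compare against. What you have written is essentially the classical Blackwell--Mackey argument from those references. Both halves check out: the preliminary observation that for a countably generated $\mathcal{H}=\sigma(H_n:n\in\mathbb{N})$ the atoms are exactly the $\sim_{\mathcal{H}}$-classes $\bigcap_n H_n^{\varepsilon_n}$ and that every member of $\mathcal{H}$ is saturated (the saturated members form a $\sigma$-algebra containing the generators); and the reduction of the lemma to the characterisation ``$A\in\mathcal{M}$ lies in $\mathcal{H}$ iff $A$ is a union of atoms of $\mathcal{H}$.'' Your sketch of that characterisation via the marker map $\phi\colon X\to\{0,1\}^{\mathbb{N}}$ and Lusin separation is the standard one; the only point deserving an explicit word is why $\phi(A)$ and $\phi(X\setminus A)$ are analytic. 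This uses that, under the isomorphism of Definition \ref{def:count}(iii), every $A\in\mathcal{M}$ corresponds to a Borel subset of an analytic subset of $[0,1]$ (hence is itself analytic), and that Borel images of analytic sets are analytic; disjointness of the two images is exactly the saturation of $A$. With that spelled out, your proof is complete and supplies a self-contained justification for a statement the paper only quotes.
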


With this lemma at hand, we show the following

\begin{thm}\label{thm:productatoms}
Let $(X,\mathcal{M}),(U,\mathcal{U})$ be analytic spaces and let $\mathcal{A}\subseteq\mathcal{M}$ and $\mathcal{F},\mathcal{G}\subseteq\mathcal{U}$ be $\sigma$-algebras on $X$ and $U$, respectively. Assume further that $\mathcal{A}, \mathcal{F}\cap\mathcal{G}$ and $(\mathcal{A}\otimes\mathcal{F})\cap(\mathcal{A}\otimes\mathcal{G})$ are countably generated. The following assertions are equivalent:
\begin{enumerate}[(i)]
\item\label{i} $(\mathcal{A}\otimes\mathcal{F})\cap(\mathcal{A}\otimes\mathcal{G})=\mathcal{A}\otimes(\mathcal{F}\cap\mathcal{G})$
\item\label{ii} all atoms of $(\mathcal{A}\otimes\mathcal{F})\cap(\mathcal{A}\otimes\mathcal{G})$ are Cartesian products.
\end{enumerate}
\end{thm}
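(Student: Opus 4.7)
The plan is to reduce both directions to a comparison of atoms via Blackwell's lemma (Lemma \ref{lem:blackwell}). Writing $\mathcal{L}:=\mathcal{A}\otimes(\mathcal{F}\cap\mathcal{G})$ and $\mathcal{K}:=(\mathcal{A}\otimes\mathcal{F})\cap(\mathcal{A}\otimes\mathcal{G})$, the trivial inclusion $\mathcal{L}\subseteq\mathcal{K}$ always holds. The product space $(X\times U,\mathcal{M}\otimes\mathcal{U})$ is itself analytic (products of analytic subsets of $[0,1]$ embed into $[0,1]^2\cong[0,1]$, the product $\sigma$-algebra is countably generated, and it separates points), and under the hypotheses both $\mathcal{L}$ and $\mathcal{K}$ are countably generated sub-$\sigma$-algebras of $\mathcal{M}\otimes\mathcal{U}$. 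By Lemma \ref{lem:blackwell} it therefore suffices to prove that $\mathcal{K}=\mathcal{L}$ if and only if the two have the same atoms, and to relate this atom equality to assertion (ii).

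A preliminary fact I will use in both directions is the explicit description of atoms of a product: for countably generated $\sigma$-algebras $\mathcal{A}$ on $X$ and $\mathcal{H}$ on $U$, the atom of $\mathcal{A}\otimes\mathcal{H}$ through $(x,u)$ equals $A_x\times H_u$, where $A_x$ and $H_u$ are the atoms of $\mathcal{A}$ and $\mathcal{H}$ through $x$ and $u$, respectively. Measurability of $A_x\times H_u$ comes from writing each atom as a countable intersection of generators or their complements; the opposite inclusion follows from observing that $\{R\in\mathcal{A}\otimes\mathcal{H}:\chi_R(x,u)=\chi_R(y,v)\}$ is a $\sigma$-algebra containing every measurable rectangle whenever $(y,v)\in A_x\times H_u$, hence equals $\mathcal{A}\otimes\mathcal{H}$.

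The direction (i)$\Rightarrow$(ii) is then immediate: if $\mathcal{K}=\mathcal{L}$, the preliminary fact exhibits the atoms of $\mathcal{K}$ as Cartesian products $A_x\times C_u$, where $C_u$ is an atom of $\mathcal{F}\cap\mathcal{G}$. For the converse (ii)$\Rightarrow$(i), fix $(x,u)$ and let $K_{(x,u)}\subseteq L_{(x,u)}$ be the atoms of $\mathcal{K}$ and $\mathcal{L}$ through $(x,u)$. By hypothesis $K_{(x,u)}=A'\times B'$ for some nonempty $A'\subseteq X$, $B'\subseteq U$. Since the nonempty rectangle $A'\times B'$ lies in $\mathcal{A}\otimes\mathcal{F}$, taking a section at some $u_0\in B'$ gives $A'\in\mathcal{A}$ and a section at some $x_0\in A'$ gives $B'\in\mathcal{F}$; applying the same sectioning argument to $A'\times B'\in\mathcal{A}\otimes\mathcal{G}$ also yields $B'\in\mathcal{G}$, so $B'\in\mathcal{F}\cap\mathcal{G}$ and $K_{(x,u)}=A'\times B'\in\mathcal{L}$. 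Being an $\mathcal{L}$-measurable set containing $(x,u)$, $K_{(x,u)}$ must contain the whole atom $L_{(x,u)}$, giving equality of atoms. Blackwell's lemma then delivers $\mathcal{K}=\mathcal{L}$.

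The main obstacle I expect is the careful set-up rather than any single computation: one has to check that $(X\times U,\mathcal{M}\otimes\mathcal{U})$ is still analytic in the sense of Definition \ref{def:count} so that Lemma \ref{lem:blackwell} applies to the pair $(\mathcal{K},\mathcal{L})$, and one has to verify the atom-of-a-product description in a form that covers the countably generated analytic setting at hand. After these two technical checkpoints, the remaining arguments are short exercises with sections of measurable rectangles and the minimality property of atoms.
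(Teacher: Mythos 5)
Your proposal is correct and follows essentially the same route as the paper: identify the atoms of the countably generated product $\sigma$-algebra as Cartesian products of atoms, use measurability of sections to show that under (ii) the atoms of $(\mathcal{A}\otimes\mathcal{F})\cap(\mathcal{A}\otimes\mathcal{G})$ coincide with those of $\mathcal{A}\otimes(\mathcal{F}\cap\mathcal{G})$, and then conclude via Lemma \ref{lem:blackwell} on the analytic space $(X\times U,\mathcal{M}\otimes\mathcal{U})$. Your pointwise comparison of the two atoms through a fixed point $(x,u)$ is a slightly cleaner packaging than the paper's union-of-atoms argument, but it is the same proof in substance.
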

\begin{proof}
Let $K$ be an atom of $\mathcal{A}\otimes(\mathcal{F}\cap\mathcal{G})$. Let $\left(A_n\right)_{n\in\mathbb{N}}$ and $\left(C_n\right)_{n\in\mathbb{N}}$ be generators of $\mathcal{A}$ and $\mathcal{F}\cap\mathcal{G}$. Then $\left(A_n\times C_m\right)_{(n,m)\in\mathbb{N}^2}$ is a countable generator of $\mathcal{A}\otimes(\mathcal{F}\cap\mathcal{G})$. Let $(x,u)\in K$. Define 
\begin{align*}
V:=\bigcap_{(n,m)\in\mathbb{N}^2} V_{n,m},\quad \text{where }V_{n,m}=A_n\times C_m \text{ or } V_{n,m}=(X\times U)\setminus(A_n\times C_m)
\end{align*}
according to whether $(x,u)\in A_n\times C_m$ or $(x,u)\in (X\times U)\setminus(A_n\times C_m)$. Then $V$ is an atom in $\mathcal{A}\otimes(\mathcal{F}\cap\mathcal{G})$ that contains $(x,u)$ (see \cite[Section 1]{RaoRao}) and therefore must be $K$. As all $V_{m,n}$ are either Cartesian products or finite unions of Cartesian products, $V$ must also be an at most countable union of Cartesian products. But if $V$ were a proper union of sets in $\mathcal{A}\otimes(\mathcal{F}\cap\mathcal{G})$, it cannot be an atom. Hence $V=K$ has a product form $K=A\times C$. 

If now \eqref{i} is satisfied, the above implies that all atoms of $(\mathcal{A}\otimes\mathcal{F})\cap(\mathcal{A}\otimes\mathcal{G})$ are Cartesian products, implying \eqref{ii}.

On the other hand, assume \eqref{ii}. Then every atom $L$ in $(\mathcal{A}\otimes\mathcal{F})\cap(\mathcal{A}\otimes\mathcal{G})$ is a product, $L=B\times D$. As sections are measurable, it follows that $D\in \mathcal{F}$ and $D\in \mathcal{G}$, so $D\in \mathcal{F}\cap\mathcal{G}$. Since 
\begin{align*}
(\mathcal{A}\otimes\mathcal{F})\cap(\mathcal{A}\otimes\mathcal{G})\supseteq\mathcal{A}\otimes(\mathcal{F}\cap\mathcal{G}),
\end{align*}
$L$ is also an atom in $\mathcal{A}\otimes(\mathcal{F}\cap\mathcal{G})$. Conversely, assume that $L$ is an atom in $\mathcal{A}\otimes(\mathcal{F}\cap\mathcal{G})$. As $(\mathcal{A}\otimes\mathcal{F})\cap(\mathcal{A}\otimes\mathcal{G})$ is countably generated and $L$ is contained therein, $L$ is a union of atoms in $(\mathcal{A}\otimes\mathcal{F})\cap(\mathcal{A}\otimes\mathcal{G})$ (see \cite[Section 1]{RaoRao}). If this union were proper, there were another atom $\check L\neq\emptyset$ in $(\mathcal{A}\otimes\mathcal{F})\cap(\mathcal{A}\otimes\mathcal{G})$ such that $\check L\subsetneqq L$. Then, following the above argument, $\check L$ were contained in $\mathcal{A}\otimes(\mathcal{F}\cap\mathcal{G})$, a contradiction to $L$ being an atom. Hence, both sets in \eqref{i} have the same atoms in $\mathcal{A}\otimes(\mathcal{F}\cap\mathcal{G})$. As $(X\times U,\mathcal{M}\otimes\mathcal{U})$ is analytic as well (see \cite[Lemma 6.6.5 (iii)]{Bogachev}), by Lemma \ref{lem:blackwell} both sets in \eqref{i} are equal.
\end{proof}

We will now show that the assertions of the equivalence in the above Theorem hold if the intersection of products is countably separated. We need the following result taken from \cite[Theorem 6.5.8]{Bogachev}.

\begin{lemma}\label{lem:cntgenfunction}
Let $(E,\mathcal{E})$ be a measurable space. Then $\mathcal{E}$ is countably generated and countably separated if and only if $(E,\mathcal{E})$ is isomorphic to a subset $A$ in $[0,1]$ with the induced Borel $\sigma$-algebra, i.e. there is a bijective, $\mathcal{E}-\mathcal{B}$ measurable mapping $H\colon E\to A$ such that
$$\mathcal{E}=\sigma(H)=\left\{H^{-1}(B): B\in \mathcal{B}(A)\right\},$$
where $\mathcal{B}(A):=\left\{B\cap A: B\in\mathcal{B}\right\}.$
\end{lemma}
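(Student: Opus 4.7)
The plan is to treat the two implications separately, the forward direction being the substantive one.

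For the easy direction, assume $(E,\mathcal{E})$ is measurably isomorphic to $(A,\mathcal{B}(A))$ for some $A\subseteq[0,1]$. The usual topology on $[0,1]$ has a countable base (e.g.\ intervals with rational endpoints) that already separates points, and intersecting with $A$ yields a countable generator of $\mathcal{B}(A)$ that separates the points of $A$. Pulling this family back through the isomorphism gives a countable generator of $\mathcal{E}$ that separates the points of $E$.

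For the forward direction, I would first combine a countable generator of $\mathcal{E}$ with a countable separating family into a single countable family $(E_n)_{n\in\mathbb{N}}\subseteq\mathcal{E}$ that both generates $\mathcal{E}$ and separates the points of $E$; since the separating sets already lie in $\mathcal{E}$, enlarging the generator in this way keeps $\sigma(\{E_n\})=\mathcal{E}$. I would then construct the Cantor-type ternary embedding
\[
H\colon E\to[0,1],\qquad H(x):=\sum_{n=1}^{\infty}\frac{2\,\one_{E_n}(x)}{3^n},
\]
whose image $A:=H(E)$ is contained in the standard Cantor set, and propose $H$ as the desired measurable isomorphism onto $(A,\mathcal{B}(A))$.

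Three things then have to be checked. First, $H$ is $\mathcal{E}$--$\mathcal{B}$-measurable, which is immediate as a uniformly convergent series of measurable functions. Second, $H$ is injective: because the ternary expansion with digits in $\{0,2\}$ is unique, $H(x)=H(y)$ forces $\one_{E_n}(x)=\one_{E_n}(y)$ for every $n$, and the separating property then yields $x=y$. Third, and most importantly, $\mathcal{E}=\sigma(H)$, or equivalently $H^{-1}\colon A\to E$ is $\mathcal{B}(A)$--$\mathcal{E}$-measurable. The inclusion $\sigma(H)\subseteq\mathcal{E}$ is just measurability of $H$; for the reverse, one observes that each generator $E_n$ is recovered as $H^{-1}(T_n)$, where $T_n\subseteq[0,1]$ is the Borel set of reals whose $n$-th ternary digit equals $2$, so $\sigma(H)$ already contains a generating family of $\mathcal{E}$.

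I expect the one nontrivial step to be this last identity $\mathcal{E}=\sigma(H)$: it is the only place where both the generating and the separating properties must be used together, and it hinges on choosing an embedding whose coordinate structure is Borel-accessible. Using a ternary embedding rather than a binary one (which suffers from non-uniqueness of dyadic expansions) is precisely what makes the coordinate-wise recovery of $\one_{E_n}$ clean; the rest is routine.
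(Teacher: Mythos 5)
Your proof is correct, and it is essentially the standard argument: the paper does not prove this lemma itself but cites it as Theorem 6.5.8 of Bogachev, whose proof is exactly your Marczewski-type ternary embedding $x\mapsto\sum_n 2\cdot 3^{-n}\mathds{1}_{E_n}(x)$ built from a combined generating-and-separating family. The key points you isolate (injectivity from separation plus uniqueness of $\{0,2\}$-ternary expansions, and recovery of each $E_n$ as the preimage of a Borel digit set so that $\sigma(H)=\mathcal{E}$) are precisely the content of that proof.
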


\begin{rem}
\begin{enumerate}
\item If $\mathcal{F}=\sigma(f)$ and $\mathcal{G}=\sigma(g)$ for functions $g, f\colon U\to{[0,1]}$ then $\mathcal{F}\cap\mathcal{G}$ is countably generated if and only if there is a function $h\colon U\to{[0,1]}$ such that $\mathcal{F}\cap\mathcal{G}=\sigma(h)$ (see \cite[Theorem 6.5.5]{Bogachev}).
\item If $\mathcal{F}\cap\mathcal{G}$ is countably separated, then, in addition, $h$ can be chosen injective (this is the assertion of the previous Lemma).
\end{enumerate}
\end{rem}
We conclude this section by showing that equality in \eqref{eq:main} holds if the intersection of products is countably separated.
\begin{thm}
Let $\mathcal{A}, \mathcal{F}, \mathcal{G}$ and $\mathcal{F}\cap\mathcal{G}$ be as in Theorem \ref{thm:productatoms}. Moreover, let $\left(\mathcal{A}\otimes\mathcal{F}\right)\cap\left(\mathcal{A}\otimes\mathcal{G}\right)$ be countably generated and countably separated. Then, \eqref{eq:main} is satisfied.
\end{thm}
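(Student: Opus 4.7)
The plan is to reduce the statement to Theorem \ref{thm:productatoms} by verifying that condition \eqref{ii} there (all atoms of $(\mathcal{A}\otimes\mathcal{F})\cap(\mathcal{A}\otimes\mathcal{G})$ are Cartesian products) is a straightforward consequence of countable separation. All the hypotheses of Theorem \ref{thm:productatoms} are directly available: $(X,\mathcal{M})$ and $(U,\mathcal{U})$ are analytic, and $\mathcal{A}$, $\mathcal{F}\cap\mathcal{G}$, $(\mathcal{A}\otimes\mathcal{F})\cap(\mathcal{A}\otimes\mathcal{G})$ are countably generated by assumption. So it suffices to exhibit the product structure of the atoms.

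The first step is to observe that a countably separated $\sigma$-algebra has only singleton atoms. Indeed, let $(M_n)_{n\in\mathbb{N}}\subseteq(\mathcal{A}\otimes\mathcal{F})\cap(\mathcal{A}\otimes\mathcal{G})$ be a countable separating family on $X\times U$. For any point $(x,u)\in X\times U$, form
\begin{align*}
S_{(x,u)}:=\bigcap_{n\in\mathbb{N}} \tilde M_n, \qquad \tilde M_n= \begin{cases}M_n, & (x,u)\in M_n, \\ (X\times U)\setminus M_n, & (x,u)\notin M_n.\end{cases}
\end{align*}
By separation, $S_{(x,u)}=\{(x,u)\}$, and as a countable intersection of sets in $(\mathcal{A}\otimes\mathcal{F})\cap(\mathcal{A}\otimes\mathcal{G})$, this singleton belongs to that $\sigma$-algebra. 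Consequently any atom $L$ of $(\mathcal{A}\otimes\mathcal{F})\cap(\mathcal{A}\otimes\mathcal{G})$ must itself be a singleton: any $(x,u)\in L$ yields $\{(x,u)\}\in (\mathcal{A}\otimes\mathcal{F})\cap(\mathcal{A}\otimes\mathcal{G})$ with $\emptyset\neq\{(x,u)\}\subseteq L$, so by minimality $L=\{(x,u)\}$.

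The second step is trivial: every singleton factors as a Cartesian product, $\{(x,u)\}=\{x\}\times\{u\}$. Thus all atoms of $(\mathcal{A}\otimes\mathcal{F})\cap(\mathcal{A}\otimes\mathcal{G})$ are Cartesian products, i.e.\ assertion \eqref{ii} of Theorem \ref{thm:productatoms} is satisfied. Applying that theorem yields equation \eqref{eq:main}.

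There is no real obstacle here: the entire content of the theorem is the translation of "countably separated" into "singleton atoms", after which the previously established Theorem \ref{thm:productatoms} does all the work. If one wanted to spell things out further, the only minor care needed is to note that $\{x\}\in\mathcal{A}$ and $\{u\}\in\mathcal{F}\cap\mathcal{G}$ are not required for the argument—all that is used is the product form of the atom, which is automatic for singletons.
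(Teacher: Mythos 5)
Your proof is correct, but it reaches the key intermediate fact---that the atoms of $(\mathcal{A}\otimes\mathcal{F})\cap(\mathcal{A}\otimes\mathcal{G})$ are singletons, hence Cartesian products---by a different and more elementary route than the paper. The paper invokes Lemma \ref{lem:cntgenfunction} (Bogachev's isomorphism theorem): countable generation plus countable separation yield an injective $H$ with $(\mathcal{A}\otimes\mathcal{F})\cap(\mathcal{A}\otimes\mathcal{G})=\sigma(H)=\{H^{-1}(B):B\in\mathcal{B}(H(X\times U))\}$, and then pulls back singletons of the image through the injective $H$. You instead intersect, for each point, the countably many separating sets or their complements containing it, observe that separation forces this intersection to be exactly the singleton, and conclude it is measurable as a countable intersection. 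Your argument uses only countable separation (by sets of the $\sigma$-algebra) to get singleton atoms, and avoids the isomorphism machinery entirely; the paper's route leans on a lemma it has already stated and will not need the explicit form of the atoms beyond product shape. The one point worth making explicit in your write-up is the reading of Definition \ref{def:count}(ii): you take the separating family $(M_n)$ to lie inside $(\mathcal{A}\otimes\mathcal{F})\cap(\mathcal{A}\otimes\mathcal{G})$ itself, which is the standard convention and the one under which Lemma \ref{lem:cntgenfunction} holds, but the paper's wording does not state it; without measurability of the $M_n$ your intersection argument would not place the singletons in the $\sigma$-algebra. Both proofs then finish identically by applying Theorem \ref{thm:productatoms}.
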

\begin{proof}
By Lemma \ref{lem:cntgenfunction}, we know that $\left(\mathcal{A}\otimes\mathcal{F}\right)\cap\left(\mathcal{A}\otimes\mathcal{G}\right)=\sigma(H)$ for an appropriate function $H$. Further, the lemma implies that $$\sigma(H)=\left\{H^{-1}(B): B\in\mathcal{B}(H(X\times U)) \right\}$$ with $\mathcal{B}(H(X\times U))$ denoting the induced Borel $\sigma$-algebra on $H(X\times U)$. As the singletons of $H(X\times U)$ are contained in $\mathcal{B}(H(X\times U))$, by the injectivity of $H$, the atoms of $\sigma(H)$ are singletons as well and thus of product form. The assertion now follows from Theorem \ref{thm:productatoms}.
\end{proof}

\section*{Acknowledgements}
The author wants to thank Karin Schnass, University of Innsbruck, and Gunther Leobacher, University of Graz, for fruitful discussions and valuable suggestions.

\bibliographystyle{plain}

\begin{thebibliography}{99}
\bibitem{Aumann} R. Aumann, {\it Borel structures for function spaces}, Ill. J. Math. 5, p. 614-635, 1961.
\bibitem{Basu} D. Basu, {\it Problems relating to the existence of maximal and minimal elements in some families of statistics (subfields)}, in Proceedings of the Fifth Berkeley Symposium on Mathematical Statistics and Probability, Volume 1: Statistics, p. 41-50, 1967.
\bibitem{BhaskaraBhaskara} M. Bhaskara Rao, K.P.S. Bhaskara Rao, {\it Borel $\sigma$-algebra on ${[0,\Omega]}$}, Bull. Acad. Polon. Sci. 26, p. 767-769, 1978.
\bibitem{RaoRao} K.P.S. Bhaskara Rao, B.V. Rao, {\it Borel Spaces}, Warszawa: Instytut Matematyczny Polskiej Akademi Nauk, http://eudml.org/doc/268562, 1981.
\bibitem{Blackwell} D. Blackwell, {\it On a Class of Probability spaces}, 3rd Berkeley Symposium 2, p. 1, 1956.
\bibitem{Bogachev} V. I. Bogachev, {\it Measure Theory Volume II}, Springer, Berlin, 2007.
\bibitem{Engelking} R. Engelking, {\it General Topology}, Heldermann, Berlin, 1989.
\bibitem{Georgiou} D. Georgiou, I. Kougias, A. Megaritis, {\it Borel Structures on the Set of Borel Mappings}, Topology and its Applications, 159, 7, p. 1906-1915, 2012.
\bibitem{Grzegorek} E. Grzegorek, {\it Remark on Some Borel Structures}, Measure Theory Oberwolfach,  p. 69-74, 1983. 
\bibitem{Kechris} A. Kechris, {\it Classical Descriptive Set Theory}, Springer, New York, 1994.
\bibitem{Mackey} G.W. Mackey, {Borel Structures in Groups and Their Duals}, Transactions of the American Mathematical Society 85, p. 134-165, 1957.
\bibitem{Parry} W. Parry, {\it Topics in Ergodics Theory}, Cambridge University Press, 1981.
\bibitem{Rao} B.V. Rao, {\it On Borel Structures}, Colloq. Math. 21 (1970), 199–204.
\bibitem{stackexchange} Mathematics Stack Exchange, https://math.stackexchange.com/questions/92546/decreasing-sequence-of-product-sigma-algebras,
 viewed on July 10, 2020.
\bibitem{Steinicke} A. Steinicke, {\it Functionals of a L\'evy Process on Canonical and Generic Probability Spaces},
Journal of Theoretical Probability, 29, 2, p. 443-458, 2016.
\end{thebibliography}

\end{document}